\documentclass[11pt]{amsart}

\usepackage{amsmath,amssymb,amsthm,mathtools}
\usepackage{microtype}
\usepackage[hidelinks]{hyperref}

\newcommand{\T}{\mathbb T}
\newcommand{\C}{\mathbb C}

\theoremstyle{plain}
\newtheorem{theorem}{Theorem}[section]

\newtheorem{lemma}[theorem]{Lemma}

\theoremstyle{remark}

\title[On the contractivity of the Riesz projection]
{On the contractivity of the Riesz projection}

\author{Xiangdi Fu}
\address{School of Fundamental Physics and Mathematical Sciences, HIAS,
University of Chinese Academy of Sciences, Hangzhou 310024, China}
\email{xdfu@ucas.ac.cn}

\author{Dilong Li}
\address{School of Mathematical Sciences, Fudan University,
Shanghai 200433, China}
\email{dlli23@m.fudan.edu.cn}

\subjclass[2020]{Primary 42B20; Secondary 30H10, 32U05}
\keywords{Riesz projection, contractivity, plurisubharmonic minorant.}

\begin{document}

\begin{abstract}
This note presents a proof that the Riesz projection is contractive from $L^q$ to $L^{4(1-1/q)}$ for the case $1<q<\infty$. This fills in the previously unresolved parameter range of \cite[Conjecture 4]{BOSZ18}.

\end{abstract}

\maketitle

\section{The main result}

Let $\T$ be the unit circle in the complex plane, equipped with the normalized Lebesgue measure $dm$. For $0<p\leq \infty$, we denote by $L^p$ the usual Lebesgue space on $\T$. It is well known that the Riesz projection (initially defined on trigonometric polynomials)
$$
\mathcal P\left(\sum_{n\in\mathbb Z}c_nz^n\right)
=\sum_{n\geq 0}c_nz^n
$$
extends to a bounded operator on $L^p$ if and only if $1<p<\infty$. Moreover, the explicit norm 
\begin{align}\label{eq:HV-norm-Riesz-projection}
  \|\mathcal P\|_{L^p\to H^p}= \csc(\pi/p), \quad 1<p<\infty,
\end{align} was determined by Hollenbeck and Verbitsky \cite{HV}. 

In this note, we consider the $L^q$-$L^p$ contractivity of the Riesz projection: \emph{for which values of $q$ and $p$ does $\mathcal P$ extend to a contraction from $L^q$ to $L^p$?} This problem was first investigated by Marzo and Seip \cite{MS11} and has since given rise to several interesting developments and applications. As pointed out in \cite{MS11}, by H\"older's inequality, this contractivity problem essentially amounts to determining the following \emph{critical exponent of the Riesz projection}: \[
\mathfrak p(q):=
\max\left\{
p>0:\mathcal P:L^q\to L^p
\text{ is contractive}
\right\}.
\] 
By orthogonality, one immediately has $\mathfrak p(2)=2$. Moreover, Marzo and Seip \cite{MS11} proved that $\mathfrak p(\infty)=4$ and hence obtained $\mathfrak p(4/3)=1$ from the duality. Later, the upper bound $$\mathfrak p(q)\leq 4\left(1-\frac1q\right), \qquad 1<q\leq \infty$$ was established by Brevig, Ortega-Cerdà, Seip, and Zhao in \cite{BOSZ18}. Motivated by a duality relation between certain contractive inequalities for Hardy spaces, Brevig, Ortega-Cerdà, Seip, and Zhao \cite[Conjecture~4]{BOSZ18} then conjectured that
\begin{align}\label{eq:BOSZ-conjecture}
\mathfrak p(q)=4\left(1-\frac1q\right), \qquad 1<q\leq \infty.
\end{align}
They also observed that Conjecture \eqref{eq:BOSZ-conjecture} predicts that the critical exponent tends to zero as $q$ decreases to one. This naturally led them to conjecture the endpoint inequality
\begin{align}\label{eq:conjecture-endpoint}
\|\mathcal P f\|_{L^0}
:=
\exp\left(\int_{\mathbb T}\log |\mathcal P f|\,dm\right)
\leq
\|f\|_{L^1}.
\end{align}
Recently, Brevig, Llinares, and Seip \cite{BLS} confirmed that the endpoint inequality \eqref{eq:conjecture-endpoint} indeed holds for all $f\in L^1$. They also formulated a conjecture for the critical exponents of Riesz projections in several variables and proved that, if the one-dimensional critical exponent conjecture \eqref{eq:BOSZ-conjecture} is true, then the corresponding conjecture in two dimensions is also true \cite[Corollary 4]{BLS}. However, this implication does not necessarily extend to higher dimensions. We mention that these contractive inequalities have important applications to Hardy spaces on higher-dimensional tori and, via the Bohr lift, to Hardy spaces of Dirichlet series; see, for instance, \cite{Bay02,BBSSZ18,BHS15,BOS21,Hel10,KQSS22,Ku22,Lin24,MS11}.

The main result of this note is to prove that \eqref{eq:BOSZ-conjecture} is valid for all the remaining values $1<q<\infty$.

\begin{theorem}\label{thm:main}
For every $1<q<\infty$, the Riesz projection $$\mathcal P: L^q \to L^{4(1-1/q)}$$ is contractive.
\end{theorem}

Our approach goes back to Cole's work on sharp inequalities for conjugate functions, where best constant inequalities were related to the existence of suitable subharmonic minorants; see \cite{Gam79,HKV03}. This method was later developed in a effective plurisubharmonic form by Hollenbeck and Verbitsky \cite{HV}, and has been widely used to determine exact norms and sharp constants for Riesz projections and Hilbert transforms \cite{Ba78,HKV03,Ose12}. 

\section{The proof of Theorem \ref{thm:main}}

The main difficulty in applying the subharmonic minorant method lies in constructing a suitable subharmonic function adapted to the desired integral inequality. In the following content, we put $$p:= 4\bigg( 1-\frac{1}{q}\bigg).$$

Let us begin by considering the weighted arithmetic-geometric mean inequality:
$$
\lambda a^q+1-\lambda
\geq
a^{\lambda q},\quad 0\leq \lambda \leq 1.
$$
Set $\lambda:= p/q$. Then $0<\lambda\leq 1$ for $1<q<\infty$, and hence
\begin{align}\label{eq:arithmetic-geometric-inequality}
  \lambda a^q+1-\lambda-a^p\geq 0.
\end{align}
Now define $$\Phi(w,z):= \lambda|w+\overline{z}|^q +1 - \lambda -|w|^p, \quad (w,z)\in \mathbb C^2.$$ 

Next, we turn to finding a suitable plurisubharmonic function that bounds $\Phi$ from below. It is clear that $\Phi\geq 0$ whenever $w=0$. Observe that when $w\neq 0$, it holds that
\begin{align*} 
  \Phi(w,z)=&\lambda \frac{\big| |w|^2 + \overline{zw}\big|^q}{|w|^q}+1 -\lambda - |w|^p\\
  \geq & \inf_{a>0}
  \left\{
  \lambda\frac{|a^2+zw|^q}{a^q}
  +1-\lambda-a^p
  \right\}.
\end{align*} 
In particular, if we define 
\begin{align}\label{eq:def-G}G(\zeta):= \inf_{a>0}
  \left\{
  \lambda\frac{|a^2+\zeta|^q}{a^q}
  +1-\lambda-a^p
  \right\},
\end{align}
then
\begin{align}\label{eq:Phi-geq-G}
  \Phi(z,w) \geq G(zw),\quad (z,w)\in \C^2.
\end{align}

  The key point is that, although the infimum of a family of subharmonic functions need not be subharmonic in general, the function $G$ defined here is indeed subharmonic. 

  \begin{lemma}\label{lem:G-subharmonic}
  The function $G$ defined in \eqref{eq:def-G} is subharmonic.
  \end{lemma}

  Let us first prove Theorem \ref{thm:main} on the assumption that Lemma \ref{lem:G-subharmonic} holds, and then proceed to verify Lemma \ref{lem:G-subharmonic} in the next section.

  \begin{proof}[Proof of Theorem \ref{thm:main}]
    Suppose $$f=g+\overline{h}$$ where $g,h$ are analytic polynomials and $h(0)=0$. Since $G$ is subharmonic, the composition $$z\mapsto G \big(g(z) \cdot h(z)\big)$$ is also subharmonic.
    By the sub-mean-value principle, we have 
  \begin{align*}
  &\lambda\int_{\T}|g+\overline h|^q\,dm+1-\lambda
  -\int_{\T}|g|^p\,dm\\
  = & \int_{\T}\Phi(g,h)\,dm
  \geq
  \int_{\T}G ( g\cdot h)\,dm\\
  \geq  & G\big( g(0)h(0)\big)=G(0)=0.
  \end{align*}
  It follows that
  \begin{equation*}
  \int_{\T}|\mathcal P f|^p\,dm
  \leq
  \lambda\int_{\T}|f|^q\,dm+1-\lambda.
  \end{equation*}
  This shows $$\|\mathcal P f\|_{L^p} \leq 1$$ provided that $\|f\|_{L^q} \leq 1$. The proof of Theorem \ref{thm:main} is completed.
  \end{proof}

  \section{Proof of Lemma \ref{lem:G-subharmonic}} 

  It remains to verify that the function $G$ defined in \eqref{eq:def-G} is indeed subharmonic. The case $q=p=2$ is trivial, and in this case $G(\zeta)=2 \operatorname{Re}\zeta$. In the following content we always assume $q\neq 2$, and set
\[
P:=p/2,\qquad Q:=q/2,\qquad \text{and recall that } \lambda=p/q=P/Q.
\]
Then the relation $p=4(1-1/q)$ gives
\begin{equation}\label{eq:critical-PQ}
PQ=2Q-1>0,
\qquad
Q-P=\frac{(Q-1)^2}{Q}>0.
\end{equation}
A change of variable ($x=a^2$) gives $$G(\zeta)=\inf_{x>0} F(x,\zeta),$$ where 
\begin{equation}\label{eq:def-partial-minimum-F}
F(x,\zeta):=
\frac{P}{Q}\frac{|x+\zeta|^{2Q}}{x^Q}
+1-\frac{P}{Q}-x^P,
\qquad (x,\zeta)\in(0,\infty)\times\C.
\end{equation}

Next, we compute the partial derivatives to locate the minimizer in \(x\) of $F(x,\zeta)$ for each fixed \(\zeta\). We adopt the notation $$\partial_1:=\frac{\partial}{\partial x}, \quad \partial_2=\frac{\partial}{\partial \zeta}, \quad \text{and } \overline{\partial}_2: =\frac{\partial}{\partial \overline{\zeta}}.$$

\begin{lemma}\label{lem:minimizer}
For each $\zeta \in \mathbb C$, there exists a unique $$x_{\operatorname{min}}(\zeta) \in (0,\infty)$$ such that $$G(\zeta)=\inf_{x>0} F(x,\zeta) = F\big(x_{\operatorname{min}}(\zeta), \zeta \big).$$ Moreover, the minimizer function $$x_{\operatorname{min}}: \zeta \mapsto x_{\operatorname{min}}(\zeta) $$ is smooth and $x_{\operatorname{min}}(\zeta) > |\zeta|$ for all $\zeta \in \C$.
\end{lemma}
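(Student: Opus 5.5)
The plan is to study $F(\cdot,\zeta)$ on $(0,\infty)$ directly. First I get existence of a minimizer from the behaviour at the endpoints. Then I locate all critical points in $(|\zeta|,\infty)$, show there is exactly one and that it is nondegenerate, and deduce smoothness from the implicit function theorem.

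\emph{Step 1: endpoint behaviour.} Since $Q>P>0$ by \eqref{eq:critical-PQ}, we have $F(x,\zeta)\sim \frac{P}{Q}x^{Q}-x^{P}\to+\infty$ as $x\to\infty$. As $x\to0^+$, if $\zeta\neq0$ the term $\frac PQ|\zeta|^{2Q}x^{-Q}$ sends $F\to+\infty$. If $\zeta=0$, then $F(x,0)=\frac PQ x^Q+1-\frac PQ-x^P$, and this is strictly decreasing near $0$ because $P<Q$. Hence in all cases the infimum is attained at an interior critical point, or, for $\zeta=0$, is at least approached from the interior; the uniqueness analysis below handles $\zeta=0$ too.

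\emph{Step 2: the derivative in $x$.} Write $\zeta=u+iv$. Using $\partial_1|x+\zeta|^{2Q}=2Q|x+\zeta|^{2Q-2}(x+u)$ and the identity $2x(x+u)-|x+\zeta|^2=x^2-|\zeta|^2$, I obtain
\[
\partial_1F(x,\zeta)=\frac{P}{x}\Big(|x+\zeta|^{2Q-2}x^{-Q}\big(x^2-|\zeta|^2\big)-x^{P}\Big).
\]
For $0<x\le|\zeta|$ the bracket is negative, so $F(\cdot,\zeta)$ is strictly decreasing on $(0,|\zeta|]$. Every minimizer therefore lies in $(|\zeta|,\infty)$, which gives $x_{\min}(\zeta)>|\zeta|$ once uniqueness is known. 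On $(|\zeta|,\infty)$ we have $x+\zeta\neq0$, so the critical points are exactly the zeros of
\[
h(x):=(2Q-2)\log|x+\zeta|+\log\big(x^2-|\zeta|^2\big)-(P+Q)\log x ,
\]
and $\operatorname{sign}\partial_1F=\operatorname{sign}h$ there. Since $h\to-\infty$ as $x\downarrow|\zeta|$ (for $\zeta\ne0$), and $h\to+\infty$ as $x\to\infty$ (because $2Q>P+Q$), $h$ has at least one zero.

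\emph{Step 3: uniqueness and nondegeneracy — the main obstacle.} I will show that $h'(x)>0$ at every zero of $h$, or preferably on all of $(|\zeta|,\infty)$. Then $h$ has exactly one zero $x_{\min}(\zeta)$, where $\partial_1F$ changes sign from negative to positive; this makes it the unique global minimizer, with $\partial_1^2F(x_{\min},\zeta)=\frac{P}{x}x^{P}h'(x_{\min})>0$. Here
\[
h'(x)=(2Q-2)\frac{x+u}{|x+\zeta|^2}+\frac{2x}{x^2-|\zeta|^2}-\frac{P+Q}{x}.
\]
I split into cases $Q>1$ and $Q<1$.

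For $Q>1$, the first term is at least $(2Q-2)/(x+|\zeta|)$. Indeed, $(x+u)(x+|\zeta|)\ge |x+\zeta|^2$ reduces to $(x-|\zeta|)(|\zeta|-u)\ge 0$ after expanding, using $x^2+2xu+|\zeta|^2$ for $|x+\zeta|^2$. It remains to show
\[
\frac{2Q-2}{x+r}+\frac{2x}{x^2-r^2}-\frac{P+Q}{x}>0,\qquad r=|\zeta|<x,
\]
which is an elementary inequality in $s=r/x\in[0,1)$. At $s=0$ it becomes $2Q-P-Q=Q-P>0$ by \eqref{eq:critical-PQ}, and I expect monotonicity in $s$ to finish it.

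For $Q<1$, I use $(x+u)/|x+\zeta|^2\le 1/|x+\zeta|\le 1/(x-r)$ to get $h'\ge \frac{2Q-2}{x-r}+\frac{2x}{x^2-r^2}-\frac{P+Q}{x}$. This lower bound can fail near $s=1$. If so, I will use the critical-point equation $|x+\zeta|^{2Q-2}(x^2-r^2)=x^{P+Q}$ together with $PQ=2Q-1$ to restrict where a zero of $h$ can occur, and verify $h'>0$ only there.

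\emph{Step 4: smoothness.} Since $F$ is real-analytic on $\{(x,\zeta):x>|\zeta|\}$ and $\partial_1^2F>0$ at the minimizer, the implicit function theorem applied to $\partial_1F(x,\zeta)=0$, viewed as a function of $(x,\operatorname{Re}\zeta,\operatorname{Im}\zeta)$, gives that $x_{\min}$ is smooth (indeed real-analytic) near every point. Uniqueness then identifies the local solutions with the global minimizer.
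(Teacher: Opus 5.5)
Your Steps 1, 2 and 4 agree with the paper: the endpoint behaviour, the formula for $\partial_1F$, the confinement of minimizers to $x>|\zeta|$, and the implicit function theorem at a nondegenerate minimum. But Step 3 carries the whole uniqueness and nondegeneracy claim, you leave it open, and the routes you sketch would not close it.

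Write $r=|\zeta|$, $s=r/x$, and $g(s)=\frac{2Q-2}{1+s}+\frac{2}{1-s^2}-(P+Q)$ for your $Q>1$ lower bound on $xh'$.
\begin{itemize}
\item The function $g$ is not monotone. Since $g'(0)=-(2Q-2)<0$, it immediately drops below its value $Q-P$ at $s=0$.
\item Worse, $(1-s^2)g(s)=(Q-P)-2(Q-1)s+(P+Q)s^2$, and for general $0<P<Q$ this can be negative. For $Q=2$, $P=1.9$ it equals $-0.15625$ at $s=1/4$. Because the bound is attained for real $\zeta>0$, $h$ is then genuinely non-monotone.
\item So knowing only $Q>P$ cannot work. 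The critical relation $PQ=2Q-1$ must enter, and your Step 3 never uses it.
\item For $Q<1$ your worry is unfounded. The lower bound does not fail near $s=1$; it tends to $+\infty$ there, because $2Q-2+\frac{2}{1+s}\to 2Q-1>0$.
\end{itemize}

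The missing step is a completed square. In both cases, putting the lower bound over the common denominator $x(x^2-r^2)$ gives the numerator $(Q-P)x^2-2|Q-1|xr+(P+Q)r^2$. Using $Q-P=(Q-1)^2/Q$ and $P+Q=(Q^2+2Q-1)/Q$, both consequences of $PQ=2Q-1$, one gets
\[
(Q-P)x^2-2|Q-1|\,xr+(P+Q)r^2=\frac{1}{Q}\Bigl[\bigl(|Q-1|x-Qr\bigr)^2+(2Q-1)r^2\Bigr].
\]
This is strictly positive for $x>r\ge 0$; when $r=0$ it reduces to $(Q-1)^2x^2/Q$, which is positive since $Q\neq 1$. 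Equivalently, its discriminant in $x$ is $-4r^2(Q-1)^2(2Q-1)/Q^2$. This gives $h'>0$ on all of $(|\zeta|,\infty)$ in both cases at once, exactly as in the paper. With it, the rest of your argument goes through: a unique zero of $h$, a sign change of $\partial_1F$, $\partial_1^2F>0$ at the minimizer, and smoothness.
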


\begin{proof}
Since $2Q>1$, the function $F$ is continuously differentiable
throughout $(0,\infty)\times\C$. A standard calculation gives 
\begin{align}\label{eq:partial-1-F}
\partial_1 F(x,\zeta)
=\begin{cases} 
  P x^{P-1}\bigg(x^{-P-Q}|x+\zeta|^{2Q-2}(x^2-|\zeta|^2) -1 \bigg),  & \text{if $\zeta \neq 0$ and $x\neq -\zeta$,}\\
  -P x^{P-1}, & \text{if $\zeta \neq 0$ and $x=-\zeta$,}\\
  P x^{P-1}\big(x^{Q-P}-1\big), & \text{if $\zeta=0$.}
\end{cases}
\end{align}
In the case $\zeta=0$, since $Q>P$, we see $$x_{\operatorname{min}}(0)=1.$$ For fixed $\zeta\ne0$, we observe that 
$$
\lim_{x\to 0}F(x,\zeta)
=\lim_{x\to\infty}F(x,\zeta)=+\infty.
$$
Thus the minimum is indeed attained, and every minimizer point is a solution of \begin{align}\label{eq:stationary-equation}
  \partial_1 F(x,\zeta)=0.
\end{align}
It follows from \eqref{eq:partial-1-F} that every minimizing point satisfies $x>|\zeta|$. 

Note that for $x>|\zeta|$, the stationary equation \eqref{eq:stationary-equation} is equivalent to
\begin{equation}\label{eq:minimizer-equation}
\frac{|x+\zeta|^{2Q-2}(x^2-|\zeta|^2)}{x^{P+Q}}=1.
\end{equation}
We claim that the left-hand side is strictly increasing in $x$ on
$(|\zeta|,\infty)$. Indeed,
\begin{equation*}
\begin{aligned}
&\partial_1\log
\left(\frac{|x+\zeta|^{2Q-2}(x^2-|\zeta|^2)}{x^{P+Q}}\right)=\operatorname{Re}\frac{2(Q-1)}{x+\zeta}
+\frac{2x}{x^2-|\zeta|^2}
-\frac{P+Q}{x}.
\end{aligned}
\end{equation*}
Note that $$\frac{1}{x+|\zeta|} \leq \operatorname{Re}\frac{1}{x+\zeta} \leq \frac{1}{x-|\zeta|}.$$ Substituting the left-hand estimate when $Q>1$ and the right-hand estimate when $Q<1$, we obtain, after putting the resulting terms over a common
denominator,

\begin{equation*}
\begin{aligned}
&\partial_1\log
\left(\frac{|x+\zeta|^{2Q-2}(x^2-|\zeta|^2)}{x^{P+Q}}\right)\quad\\
&\quad\geq 
\frac{
(Q-P)x^2
-2|Q-1| \, x \, |\zeta|
+(P+Q)|\zeta|^2
}{
x(x^2-|\zeta|^2)
}\\
&\quad=
\frac{
\bigl(|Q-1|x-Q|\zeta|\bigr)^2
+(2Q-1)|\zeta|^2
}{
Qx(x^2-|\zeta|^2)
}
>0.
\end{aligned}
\end{equation*}
The last equality uses the identities in \eqref{eq:critical-PQ}, and the strict
inequality follows from $x>|\zeta|$ and $2Q>1$. Thus
\eqref{eq:stationary-equation} has the unique solution $$x=x_{\operatorname{min}}(\zeta)>|\zeta|.$$ Moreover, it is clear that $F$ is smooth on the domain $$\mathcal R:= \Big\{(x,\zeta)\in (0,\infty)\times \C: x>|\zeta|\Big\}.$$ Since the left-hand side of \eqref{eq:minimizer-equation} is strictly increasing with $x$, it follows that $$\partial^2_1 F(x_{\operatorname{min}}(\zeta),\zeta)>0,\quad \zeta \in \C.$$ Since $\big(x_{\operatorname{min}}(\zeta),\zeta\big)\in \mathcal R$ for every $\zeta \in\C$, an application of the implicit function theorem to the equation \eqref{eq:stationary-equation} shows that $x_{\operatorname{min}}$ is smooth.
\end{proof}

By Lemma \ref{lem:minimizer}, we conclude that $$G(\zeta)=F(x_{\operatorname{min}}(\zeta), \zeta ),$$ and hence $G$ is also smooth. Thus it remains to show $$\Delta G \geq 0.$$ We now calculate the Laplacian directly. To simplify the notation, we write
$$\partial:=\frac{\partial}{\partial \zeta},\qquad \overline{\partial}:=\frac{\partial}{\partial \overline{\zeta}}.$$ Moreover, in the calculation below, $x$ always
denotes $x_{\operatorname{min}}(\zeta)$, and every partial derivative of $F$
is evaluated at the point $(x_{\operatorname{min}}(\zeta),\zeta)$.

\begin{proof}[Proof of Lemma \ref{lem:G-subharmonic}]

Differentiating 
\begin{equation}\label{eq:stationary-equation-at-minimizer}
\partial_1F(x,\zeta)=0
\end{equation}
with respect to $\zeta$ and $\overline\zeta$,
respectively, and using $\partial_1^2F(x,\zeta)>0$, we obtain
\begin{equation}\label{eq:derivatives-of-minimizer}
\partial x_{\operatorname{min}}(\zeta)
=-\frac{\partial_1\partial_2F(x,\zeta)}
{\partial_1^2F(x,\zeta)},
\qquad
\bar\partial x_{\operatorname{min}}(\zeta)
=-\frac{\partial_1\overline{\partial}_2F(x,\zeta)}
{\partial_1^2F(x,\zeta)}.
\end{equation}
On the other hand, the chain rule and
\eqref{eq:stationary-equation-at-minimizer} give
\[
\partial G(\zeta)
=\partial_1F(x,\zeta)\,\partial x_{\operatorname{min}}(\zeta)
+\partial_2F(x,\zeta)
=\partial_2F(x,\zeta).
\]
Applying $\bar\partial$ to this identity and then using
\eqref{eq:derivatives-of-minimizer}, we find that
\begin{equation}\label{eq:direct-second-derivative-of-G}
\begin{aligned}
\partial\bar\partial G(\zeta)
&=\partial_1\partial_2F(x,\zeta)\,
  \bar\partial x_{\operatorname{min}}(\zeta)
  +\partial_2\overline{\partial}_2F(x,\zeta)\\
&=\partial_2\overline{\partial}_2F(x,\zeta)
-\frac{
\partial_1\partial_2F(x,\zeta)\,
\partial_1\overline{\partial}_2F(x,\zeta)
}{\partial_1^2F(x,\zeta)}\\
&=\frac{
\partial_1^2F(x,\zeta)\,
\partial_2\overline{\partial}_2F(x,\zeta)
-|\partial_1\partial_2F(x,\zeta)|^2
}{\partial_1^2F(x,\zeta)}.
\end{aligned}
\end{equation}
Here we used the fact that $F$ is real-valued, and hence
$\partial_1\overline{\partial}_2F=
\overline{\partial_1\partial_2F}$.

Now it suffices to show the numerator in the last expression is nonnegative. Applying $\partial_1$ to \eqref{eq:partial-1-F} and then substituting \eqref{eq:minimizer-equation}, we obtain
\begin{align*}
\partial_1^2F(x,\zeta)=P x^{P-1}
\left(
\frac{2x}{x^2-|\zeta|^2}
+\operatorname{Re}\frac{2(Q-1)}{x+\zeta}
-\frac{P+Q}{x}
\right).
\end{align*}
Similarly, applying $\partial_2$ to \eqref{eq:partial-1-F} and then substituting \eqref{eq:minimizer-equation} gives
\begin{align*}
\partial_1\partial_2F(x,\zeta)=P x^{P-1}
\left(
\frac{Q-1}{x+\zeta}
-\frac{\overline\zeta}{x^2-|\zeta|^2}
\right).
\end{align*}
Finally, direct differentiation of \eqref{eq:def-partial-minimum-F} gives
\begin{align*}
\partial_2F(x,\zeta)
&=P x^{-Q}|x+\zeta|^{2Q-2}(x+\overline\zeta),\\
\partial_2\overline{\partial}_2F(x,\zeta)
&=PQx^{-Q}|x+\zeta|^{2Q-2}
=\frac{PQx^P}{x^2-|\zeta|^2},
\end{align*}
where the last equality again follows from
\eqref{eq:minimizer-equation}.

Substituting these three formulas into \eqref{eq:direct-second-derivative-of-G} gives
\begin{align*}
&\frac{
\partial_1^2F(x,\zeta)\,
\partial_2\overline{\partial}_2F(x,\zeta)
-|\partial_1\partial_2F(x,\zeta)|^2
}{P^2x^{2P-2}}\\
&\quad=
\frac{Qx}{x^2-|\zeta|^2}
\left(
\frac{2x}{x^2-|\zeta|^2}
+2(Q-1)\operatorname{Re}\frac{1}{x+\zeta}
-\frac{P+Q}{x}
\right)\\
&\qquad
-\left|
\frac{Q-1}{x+\zeta}
-\frac{\overline\zeta}{x^2-|\zeta|^2}
\right|^2\\
&\quad=
\frac{2Qx^2-|\zeta|^2}{(x^2-|\zeta|^2)^2}
-\frac{Q(P+Q)}{x^2-|\zeta|^2}\\
&\qquad + \underbrace{ \frac{2Q(Q-1)x}{x^2-|\zeta|^2}
  \operatorname{Re}\frac{1}{x+\zeta}
-\frac{(Q-1)^2}{|x+\zeta|^2}
+\frac{2(Q-1)}{x^2-|\zeta|^2}
  \operatorname{Re}\frac{\zeta}{x+\zeta}}_{\text{denoted by }\mathcal T}.
\end{align*}
Since
\[
\operatorname{Re}\frac{1}{x+\zeta}
=\frac{x+\operatorname{Re}\zeta}{|x+\zeta|^2},
\qquad
\operatorname{Re}\frac{\zeta}{x+\zeta}
=\frac{x\operatorname{Re}\zeta+|\zeta|^2}{|x+\zeta|^2},
\]
the last three terms in the preceding display can be combined as 
\begin{align*}
\mathcal T&=\frac{Q-1}{(x^2-|\zeta|^2)|x+\zeta|^2}
\Big(
2Qx(x+\operatorname{Re}\zeta)
-(Q-1)(x^2-|\zeta|^2)\\
&\hspace{5mm}
+2(x\operatorname{Re}\zeta+|\zeta|^2)
\Big)\\
&=
\frac{Q-1}{(x^2-|\zeta|^2)|x+\zeta|^2}
\Big(
(Q+1)x^2+2(Q+1)x\operatorname{Re}\zeta
+(Q+1)|\zeta|^2
\Big)\\
&=\frac{Q^2-1}{x^2-|\zeta|^2}.
\end{align*}
As a consequence,
\begin{align*}
&\frac{
\partial_1^2F(x,\zeta)\,
\partial_2\overline{\partial}_2F(x,\zeta)
-|\partial_1\partial_2F(x,\zeta)|^2
}{P^2x^{2P-2}}\\
&\quad=
\frac{2Qx^2-|\zeta|^2}{(x^2-|\zeta|^2)^2}
-\frac{Q(P+Q)}{x^2-|\zeta|^2}
+\frac{Q^2-1}{x^2-|\zeta|^2}\\
&\quad=
\frac{
(2Q-1-PQ)x^2+PQ|\zeta|^2
}{(x^2-|\zeta|^2)^2}\\
&\quad=
\frac{PQ|\zeta|^2}{(x^2-|\zeta|^2)^2}
\geq 0,
\end{align*}
where the last equality follows from $2Q-1-PQ=0$ in \eqref{eq:critical-PQ}. Since
$\partial_1^2F(x,\zeta)>0$, formula
\eqref{eq:direct-second-derivative-of-G} now shows that
\[
\partial\bar\partial G(\zeta)
=\frac{
P^2x^{2P-2}PQ|\zeta|^2
}{
(x^2-|\zeta|^2)^2\partial_1^2F(x,\zeta)
}
\geq0.
\]
Thus $G$ is subharmonic. The proof is completed.
\end{proof}

\smallskip\noindent\textbf{AI disclosure.} This work was developed with the assistance of OpenAI's ChatGPT-5.6 Sol. The authors originally proposed seeking a proof of \cite[Conjecture 4]{BOSZ18} by applying Cole's subharmonic minorant method and supplied ChatGPT with several papers in which related subharmonic methods are successfully used to determine exact operator norms. After the authors interacted with ChatGPT over several rounds, ChatGPT identified the suitable subharmonic minorant and supplied a complete proof. The authors subsequently verified the proof independently and revised its presentation primarily to improve readability, making only minor simplifications to the mathematical argument. The authors take full responsibility for the contents of this manuscript.

\smallskip\noindent\textbf{Acknowledgements.} Dilong Li was supported by the  National Natural Science Foundation of China (Grant No. 124B2005).

\end{document}